\newtheorem{theorem}{Theorem}
\newtheorem{prop}[theorem]{Proposition}
\newtheorem{claim}[theorem]{Claim}
\newtheorem*{theorem*}{Theorem}
\newtheorem*{lemma*}{Lemma}
\newtheorem*{prop*}{Proposition}
\newtheorem*{corollary*}{Corollary}
\newtheorem*{remark*}{Remark} 
\newtheorem*{remarks*}{Remarks}
\newtheorem*{conj*}{Conjecture}
\def\N{{\mathbb N}}
\def\S{\mathbb{S}}
\def\R{{\mathbb R}}
\def\Z{{\mathbb Z}}
\def\B{{\mathbb B}}
\def\d{\delta}
\def\s{\sigma}
\renewcommand{\bar}{\overline}
\DeclareFontFamily{U}{mathx}{\hyphenchar\font45}
\DeclareFontShape{U}{mathx}{m}{n}{
	<5> <6> <7> <8> <9> <10>
	<10.95> <12> <14.4> <17.28> <20.74> <24.88>
	mathx10
}{}
\def\wh{\widehat}
\providecommand{\customgenericname}{}
\newcommand{\newcustomtheorem}[2]{%
	\newenvironment{#1}[1]
	{%
		\renewcommand\customgenericname{#2}%
		\renewcommand\theinnercustomgeneric{##1}%
		\innercustomgeneric
	}
	{\endinnercustomgeneric}
}
\newcommand{\MLp}{L^{p_1}(\R^n)\times\cdots\times L^{p_m}(\R^n)\to L^p(\R^n)}
\begin{document}

 \title[Multilinear singular integrals with homogeneous kernels near $L^1$]{Multilinear singular integrals with homogeneous kernels near $L^1$}
\author[G.~Dosidis and L.~Slav{\'i}kov{\'a}]{Georgios~Dosidis and Lenka~Slav{\'i}kov{\'a}}

\newcommand{\Addresses}{{
		\bigskip
		\footnotesize
		
		\textsc{Georgios~Dosidis.}
		\textsc{Department of Mathematical Analysis, Faculty of Mathematics and Physics, Charles University, Sokolovsk\'a 83, 186 75 Praha 8, Czech Republic}\par\nopagebreak
		\textit{E-mail address:} \texttt{Dosidis@karlin.mff.cuni.cz}
		
		\textsc{Lenka~Slav{\'i}kov{\'a}.}
		\textsc{Department of Mathematical Analysis, Faculty of Mathematics and Physics, Charles University, Sokolovsk\'a 83, 186 75 Praha 8, Czech Republic}\par\nopagebreak
		\textit{E-mail address:} \texttt{slavikova@karlin.mff.cuni.cz}

}}
\thanks{The authors were supported by the Primus research programme PRIMUS/21/SCI/002 of Charles University.}

\begin{abstract}  We obtain the optimal open range of $\MLp$ bounds for multilinear singular integral operators with homogeneous kernels of the form $\Omega(\frac{y}{|y|})|y|^{-mn}$, where $\Omega$ is a function in $L^{q}(\S^{mn-1})$ with vanishing integral and $q>1$.\end{abstract}

\maketitle

\section{Introduction}
Singular integral operators of the form
\begin{equation}\label{E:linear}
Tf(x) := pv \int_{\R^{n}} f(y)K(x-y)dy
\end{equation}
have been extensively studied ever since their introduction by Calder\'on and Zygmund in \cite{CZ52}. In the special case of homogeneous kernels $K(x)= \frac{\Omega(x/|x|)}{|x|^{n}}$, which includes the Hilbert and the Riesz transforms, boundedness of the operator $T$ from $L^p(\R^n)$ to itself can be obtained for all $1<p<\infty$ without any smoothness assumption on $\Omega$. This was first proven in \cite{CZ56} under the assumption that $\Omega$ has vanishing integral on the unit sphere and belongs to the Orlicz space $L\log L(\S^{n-1})$, and subsequently improved in \cite{CW77,C79} by making use of Hardy spaces. Endpoint bounds were obtained in \cite{C88,CRdF88,H88,S96}.

In this paper, we investigate the multilinear variant of the operator~\eqref{E:linear}. Namely, given $m\in \N$ and a function $\Omega$ with zero mean on the unit sphere which, in addition, belongs to $L^q(\S^{mn-1})$ for some $q>1$, we set $K(x)= \dfrac{\Omega(x')}{|x|^{mn}}$ for $x \in \R^{mn} \setminus \{0\}$, where $x' = \frac{x}{|x|}$, and consider the multilinear singular integral operator 
\begin{equation}\label{eq:T}T^m_{\Omega}(f_1,f_2,\dots,f_m)(x) := pv \int_{\R^{mn}} K(x-y_1,x-y_2,\dots,x-y_m) \prod_{i=1}^m f_i(y_i) dy.
\end{equation}
Throughout the paper, we assume that exponents $p_1,\dots,p_m, p$ satisfy the H\"older scaling $\frac1p = \sum_{i=1}^m\frac{1}{p_i}$, and we study the question of $\MLp$ boundedness of $T^m_\Omega$.

The $L^{p_1}(\R^n) \times L^{p_2}(\R^n) \to L^p(\R^n)$ boundedness of the bilinear operator $T^2_\Omega$ was proven by Coifman and Meyer in \cite{CM75} under the assumption that $\Omega$ is a function of bounded variation and $1<p_1,p_2,p<\infty$. This result led, in particular, to an alternative proof of $L^p$ estimates for Calder\'on's first commutator, an operator introduced by Calder\'on in~\cite{C65} in connection with the study of elliptic partial differential equations. A yet another proof of the boundedness of Calder\'on's commutator relevant for our present approach was given by Muscalu in~\cite{M14}.
An extension of the Coifman--Meyer theorem~\cite{CM75} to arbitrary exponents satisfying $1<p_1,p_2<\infty$, with $p$ being possibly less than $1$, was obtained by Kenig and Stein in~\cite{KS99}  and  by Grafakos and Torres in \cite{GT99}. 

The first result concerning bounds for $T^2_\Omega$ in the case when $\Omega$ does not possess any smoothness was obtained in  \cite{DGHST11}. In that paper, the authors considered the one-dimensional case $n=1$ and proved boundedness of the operator $T^2_\Omega$ in a certain range of exponents under the assumption that the even part of $\Omega$ belongs to the Hardy space $H^1(\S^1)$. The higher--dimensional case was discussed in \cite{GHH18}, where boundedness of $T^2_\Omega$ in the full range $1<p_1,p_2<\infty$ was proven under the assumption that $\Omega\in L^\infty(\S^{2n-1})$, and boundedness in the \emph{``local $L^2$ case"}, that is $2\leq p_1,p_2\leq\infty$ and $1\leq p \leq 2$, was proven for $\Omega\in L^2(\S^{2n-1})$. The local $L^2$ case bound was later improved in \cite{GHS20}, weakening the $L^2$ assumption on $\Omega$ to the requirement that $\Omega \in L^q(\S^{2n-1})$ with $q>4/3$. Boundedness in the extended range of exponents $\frac1p+\frac1q<2$, when $q>4/3$, was proven recently in \cite{HP21}.

In this work, we extend the boundedness results for $T^2_\Omega$ to the case $1<q \leq 4/3$. Let $d\sigma$ denote the surface measure on the sphere.
\begin{theorem}\label{th:1} Let $1<p_1,p_2<\infty$, $\frac{1}{p}=\frac{1}{p_1}+\frac{1}{p_2}$, and suppose that $\Omega \in L^q(\S^{2n-1})$, $q>1$, with $\int_{\S^{2n-1}} \Omega(\theta) d\sigma(\theta)= 0$. If 
\begin{equation}\label{E:opt-cond}
\frac{1}{p}+\frac{1}{q}<2,
\end{equation}
 then there exists a constant $C=C(n,p_1,p_2,q)$ such that
	\begin{equation}\label{E:strong-type-estimate}
	\|T^2_{\Omega}(f_1,f_2)\|_{L^p(\R^n)}\leq C { \|\Omega\|_{L^q(\S^{2n-1})}} \|f_1\|_{L^{p_1}(\R^n)}\|f_2\|_{L^{p_2}(\R^n)}.
	\end{equation}
\end{theorem}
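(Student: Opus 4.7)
My plan is to combine a dyadic decomposition of the kernel in the radial variable with the wavelet/tile analysis of Calder\'on-commutator-type operators developed by Muscalu~\cite{M14}, and then to interpolate between a smoothed estimate that exploits the cancellation of $\Omega$ and a cruder estimate that uses only its $L^q$ size. The condition $\frac{1}{p}+\frac{1}{q}<2$ should arise as the open range in which the resulting geometric sum over dyadic scales converges.

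Concretely, I would fix a smooth radial bump $\psi$ supported in $[1/2,2]$ with $\sum_{j\in\Z}\psi(2^{-j}t)=1$ for $t>0$, set $K_j(y)=K(y)\psi(2^{-j}|y|)$, and let $T_j$ be the bilinear operator with kernel $K_j$, so that $T^2_{\Omega}=\sum_{j\in\Z}T_j$. Each $K_j$ has mean zero (by the vanishing-integral assumption on $\Omega$), is supported in the annulus $|y|\sim 2^j$, and its size is controlled in terms of $\|\Omega\|_{L^q(\S^{2n-1})}$. For each $j$, I would then expand the angular factor $\Omega$ in a spherical Fourier (or spherical-harmonic) basis $\{Y_\nu\}$ to write $T_j=\sum_\nu c_\nu T_j^{(\nu)}$, where each $T_j^{(\nu)}$ has a smooth angular profile. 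Using the wavelet/tile decomposition of~\cite{M14}, each $T_j^{(\nu)}$ should be bounded $\BLp$ with operator norm growing at most polynomially in $\nu$; this polynomial loss is absorbed by the $L^q$ integrability of $\Omega$ when $q>1$. Summing over $\nu$ should give an estimate on $T_j$ that is uniform in $j$ inside an interior subrange of H\"older scaling, with geometric decay in $|j|$ in a slightly narrower subrange.

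To sum the dyadic pieces $T_j$ over $j\in\Z$, I would complement the above with a cruder ``trivial'' estimate coming from the pointwise size of $K_j$: a Minkowski-inequality argument and the $L^q$ norm of $\Omega$ yield a uniform-in-$j$ but range-restricted bound that does not use cancellation. Interpolating these two families of estimates, while respecting the H\"older scaling, should recover exactly the open region $\frac{1}{p}+\frac{1}{q}<2$, with the endpoint line appearing as the precise boundary where the geometric series fails to converge.

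The principal obstacle is to push the wavelet analysis below the threshold $q=4/3$ obtained in~\cite{HP21}. For $q>4/3$ the polynomial loss in the spherical frequency $\nu$ is comfortably absorbed by the $L^q$ norm of $\Omega$, whereas for $1<q\leq 4/3$ a more delicate paraproduct estimate is required, presumably exploiting cancellation of $\Omega$ at intermediate scales on the sphere rather than only its global mean-zero property. Verifying that the resulting admissible range matches exactly $\frac{1}{p}+\frac{1}{q}<2$ will require a careful balancing of the smoothed and trivial estimates at the endpoint; I expect this balancing, together with the low-$q$ wavelet bound, to be the technical heart of the argument.
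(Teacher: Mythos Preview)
Your outline has a structural gap that would prevent the argument from closing. With the single radial decomposition $K_j(y)=K(y)\psi(2^{-j}|y|)$, dilation invariance of the homogeneous kernel forces $\|T_j\|_{\BLp}$ to be \emph{independent of $j$}: replacing $y\mapsto 2^j y$ shows $T_j$ is a rescaled copy of $T_0$, and the $L^{p_1}\times L^{p_2}\to L^p$ norm is scale invariant under H\"older scaling. Hence there can be no ``geometric decay in $|j|$'' for the individual pieces, and $\sum_j T_j$ cannot be summed in operator norm. Any viable scheme has to introduce a second parameter that measures genuine cancellation, not just spatial scale. This is exactly what the paper does: after the radial cut $K^i$, it performs a further Littlewood--Paley decomposition on the Fourier side, setting $\widehat{K^i_j}=\widehat{K^i}\,\widehat\beta(2^{-j+i}\cdot)$ and $K_j=\sum_i K^i_j$. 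The index $j$ now records the separation between spatial and frequency scales, and it is this parameter that carries decay.

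The spherical-harmonic expansion is also a detour here. The paper does not decompose $\Omega$ angularly at all; instead, its new input is a direct $L^1$-level estimate (Proposition~\ref{pr:L1}): for $\Omega\in L^1(\S^{2n-1})$ and $p>1$ one has $\|T_j\|\lesssim_\varepsilon 2^{j\varepsilon}\|\Omega\|_{L^1}$. This is obtained by writing the $(m{+}1)$-linear form associated with $T_j$, observing that the Fourier support of $\widehat{K_j^0}$ forces at least two of the inputs to live at a fixed frequency scale, and then applying Muscalu's shifted maximal and square function bounds (Proposition~\hyperref[pr:Mus]{C}) with shifts $2^j y_i$; the logarithmic loss in the shift integrates against $|K_j^0|$ and produces only the polynomial-in-$j$ loss. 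No decomposition of $\Omega$ is needed, so the ``polynomial loss in $\nu$'' problem you flag never arises. The full range $\frac1p+\frac1q<2$ then comes from interpolating this $q=1$ bound with the known $q=2$ exponential decay (Proposition~\hyperref[pr:locL2]{B}), viewing $T_j$ as $(m{+}1)$-linear in $(\Omega,f_1,\dots,f_m)$, followed by the extension/bootstrap from~\cite{GHHP22}. Your ``trivial size'' estimate and the endpoint balancing are replaced by this $L^1$--$L^2$ interpolation in the $\Omega$ variable together with Proposition~\hyperref[pr:GT]{A} for $j<0$.
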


The proof of \Cref{th:1} relies on estimates obtained in \cite{GHH18, HP21} combined with a new estimate of minimal blow-up when $\Omega\in L^1(\S^{2n-1})$.

\begin{remark*} The question of weak--type boundedness of $T^2_\Omega$ in the limiting case $\frac{1}{p}+\frac1q = 2$ remains open. In addition, no bounds are known for the operator $T^2_\Omega$ when $\Omega$ belongs merely to $L\log L(\S^{2n-1})$ except in the one-dimensional setting, in which a partial result was obtained in~\cite{DGHST11}.
\end{remark*}

We graph the range of boundedness of $T^2_\Omega$. The dotted line corresponds to $\frac{1}p + \frac1q =2$. Here and in what follows, $q'$ stands for the exponent satisfying $\frac{1}{q}+ \frac{1}{q'}=1$.

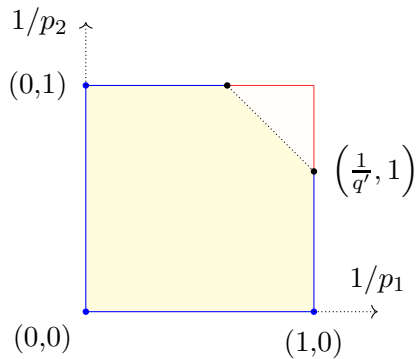
\begin{figure}[h]
	\newcommand{\xx}{3}
	\newcommand{\yy}{3}
	
	\begin{center}
		\begin{tikzpicture}
			\coordinate (O) at (0,0);
			\coordinate (A) at (\xx,0);
			\coordinate (B) at (0,\yy);
			\coordinate (C) at (\xx,\yy);
			\coordinate (s1) at (\xx,0.62*\yy);
			\coordinate (s2) at (0.62*\xx,\yy);
			\coordinate (X) at (1.28*\xx,0);
			\coordinate (Y) at (0,1.28*\yy);
			
			\draw[white,fill=yellow!16] (O) -- (A) -- (s1) -- (s2) -- (B) -- (O) -- cycle;
			\draw[white,fill=yellow!02.5] (s1) -- (C) -- (s2) -- (s1) -- cycle;
			
			\draw[blue] (O) -- (A);
			\draw[blue] (O) -- (B);
			\draw[blue] (A) -- (s1);
			\draw[blue] (B) -- (s2);
			\draw[red!80] (s1) -- (C);
			\draw[red!80] (s2) -- (C);
			\draw[black,densely dotted] (s1) -- (s2);

			\foreach \rr in {O, A, B} {\filldraw[blue](\rr) circle(1pt);}
			\foreach \rr in {s1,s2} {\filldraw[black](\rr) circle(1pt);}
			\node [below  =1mm of A]  {(1,0)};
			\node [left  =1mm of B]  {(0,1)};
			\node [right =1mm of s1]  {$\left(\frac{1}{q'} , 1\right)$};
			\node [below left = 0.5mm of O]  {(0,0)};
			
			\node [above  =1mm of X]  {$1/p_1$};
			\node [left =1mm of Y] {$1/p_2$};
			\draw[->,black,densely dotted] (A) -- (X);
			\draw[->,black,densely dotted] (B) -- (Y);

		\end{tikzpicture}
		\caption[Figure 1]{Optimal range of $L^{p_1}(\R^n)\times L^{p_2}(\R^n)\to L^p(\R^n)$ boundedness of $T^2_\Omega$ when $\Omega\in L^q(\S^{2n-1})$, $q>1$. }\label{F1}
	\end{center}
\end{figure}

The boundedness of the multilinear operator $T^m_\Omega$ with a rough kernel was first treated in \cite{GHHP20}, where initial $L^{2}(\R^n)\times\cdots\times L^{2}(\R^n)\to L^{2/m}(\R^n)$ bounds were established. In order to discuss the boundedness region for this  operator, we introduce some notation. For any multiindex $\alpha\in \{0,1\}^m$, we write $|\alpha|=\sum_{i=1}^m\alpha_i$. If $\vec{\frac{1}{p}}  = \left(\frac{1}{p_1},\dots, \frac{1}{p_m}\right)$, we set $\frac{1}{p_\alpha}  = \alpha\cdot \vec{\frac{1}{p}} = \sum_{i=1}^m \frac{\alpha_i}{p_i}$. Then $\frac{1}{p} = \frac{1}{p_{(1,\dots,1)}} =\sum_{i=1}^m \frac{1}{p_i}$. For any $q>1$, we write $\vec{\frac{1}{p}}\in\mathcal{H}^m(q)$ if for all $\alpha\in\{0,1\}^m$,
\begin{equation}\label{eq:H^m}
	\frac{1}{p_\alpha} + \frac{|\alpha| - 1}q < |\alpha|.	
\end{equation}
When $|\alpha|=0$ or $1$, \Cref{eq:H^m} is satisfied for all $1<p_1,\dots,p_m< \infty$, while the condition for $|\alpha|=m$ is
\begin{equation*}
	\frac{1}{p} + \frac{m - 1}q < m.	
\end{equation*}
%
%
%
We graph $\mathcal{H}^3(q)$ in the \Cref{fig:2} below. The equation for $|\alpha|=3$ corresponds to the red triangle, while the $3$ equations for $|\alpha|=2$, which have the form
\[
\frac{1}{p_1}+\frac{1}{p_2} +\frac{1}{q} <2, \quad \frac{1}{p_1}+\frac{1}{p_3} +\frac{1}{q} <2, \quad \frac{1}{p_2}+\frac{1}{p_3} +\frac{1}{q} <2,
\] 
correspond to the three red rectangles.

Recently in \cite{GHHP22}, the bounds for $T^m_\Omega$ were extended from the middle point $\left(\frac12,\dots,\frac12\right)$ to the whole region $\mathcal{H}^m(q)$, when $q\geq 2$. Similar to the bilinear case, we extend these boundedness results to the case $1<q<2$. 

\begin{theorem}\label{th:2} Let $1<p_1,p_2,\dots,p_m<\infty$, $\frac{1}{p} = \sum_{i=1}^m \frac{1}{p_i}$ be such that $\left(\frac{1}{p_1},\dots,\frac{1}{p_m}\right)\in \mathcal{H}^m(q)$ and suppose that  $\Omega\in L^q(\S^{mn-1})$, $q>1$, with $\int_{\S^{mn-1}} \Omega(\theta)d\sigma(\theta) = 0$. Then there exists a constant $C=C(n,p_1,p_2,\dots,p_m,q)$ such that
	\begin{equation}\label{E:strong-type-estimate-M}
		\|T^m_{\Omega}(f_1,\dots,f_m)\|_{L^p(\R^n)}\leq C { \|\Omega\|_{L^q(\S^{mn-1})}} \prod_{i=1}^m \|f_i\|_{L^{p_i}(\R^n)}.
	\end{equation}
\end{theorem}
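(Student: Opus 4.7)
The plan is to follow the scheme of the proof of \Cref{th:1} and extend the result of \cite{GHHP22}, which currently gives the full $\mathcal{H}^m(q)$ region only when $q\geq 2$, to the range $1<q<2$ by a multilinear complex interpolation against a new ``minimal blow-up'' endpoint estimate near $L^1(\S^{mn-1})$. Since $\mathcal{H}^m(q)$ is cut out by the half-spaces \eqref{eq:H^m} indexed by multiindices $\alpha\in\{0,1\}^m$ with $|\alpha|\geq 2$, it suffices to verify \eqref{E:strong-type-estimate-M} for $\vec{\frac{1}{p}}$ in a neighborhood of each vertex of the constraint polytope at which \eqref{eq:H^m} is tight.

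The main new ingredient should be a near-$L^1$ estimate. Using a Littlewood--Paley-type annular decomposition of the kernel $K(x)=\Omega(x')/|x|^{mn}$, I would write $T^m_\Omega=\sum_{j\in\Z} T^m_{\Omega,j}$ and establish, uniformly in $j$ and at each relevant vertex configuration, a multilinear $\MLpW$ bound of the form
\[
\|T^m_{\Omega,j}(f_1,\dots,f_m)\|_{L^p(\R^n)}\leq C\,\|\Omega\|_{L^{1+\varepsilon}(\S^{mn-1})}\,2^{-\delta(\varepsilon)|j|}\prod_{i=1}^m\|f_i\|_{L^{p_i}(\R^n)},
\]
with a decay rate $\delta(\varepsilon)>0$ that degrades only polynomially as $\varepsilon\to 0^+$. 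Adapting the arguments from \cite{GHS20} and \cite{HP21} to the multilinear framework should produce such bounds; summing in $j$ then yields an endpoint estimate with controlled blow-up in $\varepsilon$.

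With this in hand, I would apply Stein's multilinear complex interpolation theorem to an analytic family $\{T^m_{\Omega_z}\}$, where $\Omega_z$ is obtained from a level-set decomposition of $\Omega$ parametrized by $\mathrm{Re}(z)$. Placing the $q\geq 2$ estimate of \cite{GHHP22} on one boundary of the vertical strip and the new near-$L^1$ estimate on the other, and tuning the parameters to a chosen target $\vec{\frac{1}{p}}\in\mathcal{H}^m(q)$ with $1<q<2$, should recover \eqref{E:strong-type-estimate-M} throughout the desired range.

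The principal obstacle I expect is the first step: transferring the minimal blow-up estimate from the bilinear to the multilinear setting requires uniform control across the $2^m-m-1$ vertex configurations corresponding to multiindices $\alpha$ with $|\alpha|\geq 2$, each governed by its own constraint in \eqref{eq:H^m}. The rate of blow-up as $\varepsilon\to 0^+$ must be mild enough that the interpolation recovers the \emph{entire} open region $\mathcal{H}^m(q)$, and a careful choice of the analytic family $\Omega_z$, adapted to the vertex governing the chosen target point, will be needed to respect this geometry.
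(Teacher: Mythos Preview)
Your overall architecture---Littlewood--Paley decomposition of the kernel, an endpoint estimate near $L^1$, interpolation against the $q\geq 2$ result of \cite{GHHP22}---matches the paper's. But the execution differs substantially, and your proposal leaves the central step underspecified.

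The main gap is the endpoint estimate. The paper does not establish an $L^{1+\varepsilon}$ bound with decay $2^{-\delta(\varepsilon)|j|}$ by adapting \cite{GHS20} or \cite{HP21}; those techniques already stall at $q>4/3$ in the bilinear case, and pushing to $q$ near $1$ is precisely the difficulty this paper addresses. Instead, the paper proves a bound directly at $q=1$ (Proposition~\ref{pr:L1}): for $p>1$ one has $\|T_j\|\lesssim j^{m}\|\Omega\|_{L^1(\S^{mn-1})}$, hence $Q(j)\lesssim_\varepsilon 2^{j\varepsilon}$. The proof dualizes to the $(m+1)$-linear form $\Lambda_j$, expands via Fourier inversion, and reduces to the shifted maximal and square function bounds of Muscalu \cite{M14} (Proposition~\ref{pr:Mus}). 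This is a genuinely different tool from anything in \cite{GHS20,HP21}, and your proposal does not identify it.

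On interpolation and geometry, the paper is also simpler than your plan. It avoids analytic families $\Omega_z$ built from level sets and avoids any vertex-by-vertex analysis of $\mathcal{H}^m(q)$. Instead, it views $T_j$ as $(m+1)$-linear in $(\Omega,f_1,\dots,f_m)$ and applies ordinary multilinear complex interpolation between the $q=1$ bound (Proposition~\ref{pr:L1}) and the $q=2$ bound (Proposition~\ref{pr:locL2}) to obtain exponential decay on the small cube $\mathcal{C}^m(q)=(0,1/q']^m$ (Claim~\ref{c:1}). It then invokes the bootstrap extension \cite[Proposition~6.2]{GHHP22} as a black box to propagate the sub-exponential bound from $\mathcal{C}^m(q)$ to all of $\mathcal{H}^m(q)$ (Claim~\ref{c:2}), and interpolates once more between Claims~\ref{c:1} and~\ref{c:2} to recover exponential decay throughout $\mathcal{H}^m(q)$ (Claim~\ref{c:3}). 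The concern you flag about uniform control across the $2^m-m-1$ vertex configurations is exactly what this bootstrap handles automatically.
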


In particular, if $1<p,p_1,p_2,\dots,p_m<\infty$, then \Cref{E:strong-type-estimate-M} holds for all $q>1$. Note that for $m=2$, the range $\mathcal H^2(q)$ corresponds to the condition $\frac1p+\frac1q < 2$, so \Cref{th:1} is a special case of \Cref{th:2}. Moreover, for $m=1$, the condition in \Cref{eq:H^m} becomes $p>1$, which is the range of boundedness for the linear operator when $\Omega\in L^q(\S^{n-1})$ for some $q>1$.

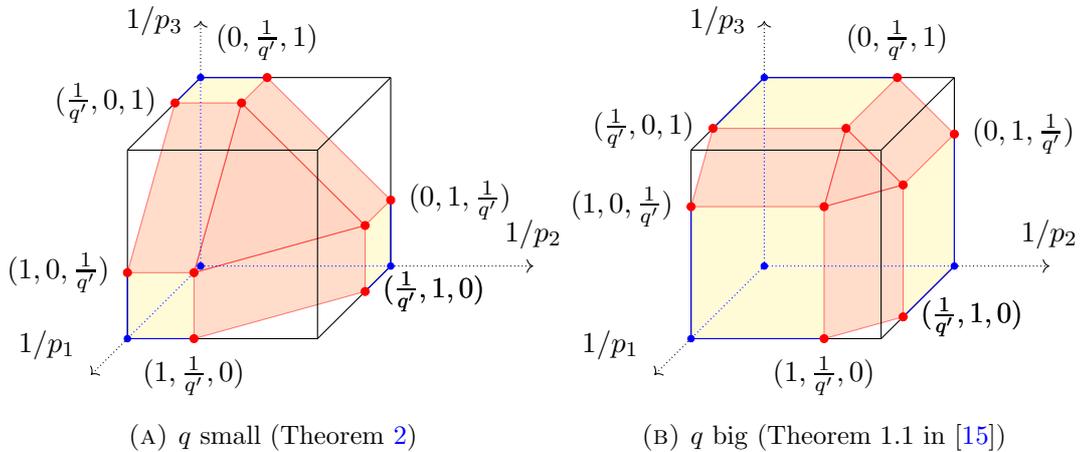
\begin{figure}[h]
	\newcommand{\Depth}{2.5}
	\newcommand{\Height}{2.5}
	\newcommand{\Width}{2.5}
	
	\newcommand{\qsmall}{0.35}
	\newcommand{\qbig}{0.7}

\centering
\begin{subfigure}{.5\textwidth}
	\centering
	\begin{tikzpicture}
\coordinate (O) at (0,0,0);
\coordinate (A) at (0,0,\Height);
\coordinate (B) at (\Depth,0,0);
\coordinate (C) at (0,\Width,0);
\coordinate (D) at (0,\Width,\Height);
\coordinate (E) at (\Depth,0,\Height);
\coordinate (F) at (\Depth,\Width,0);
\coordinate (G) at (\Depth,\Width,\Height);
\coordinate (hx) at (0,0,\qsmall*\Height);
\coordinate (hy) at (\qsmall*\Depth,0,0);
\coordinate (hz) at (0,\qsmall*\Width,0);
\coordinate (hxY) at (\Depth,0,\qsmall*\Height);
\coordinate (hyX) at (\qsmall*\Depth,0,\Height);
\coordinate (hzY) at (\Depth,\qsmall*\Width,0);
\coordinate (hxZ) at (0,\Width,\qsmall*\Height);
\coordinate (hyZ) at (\qsmall*\Depth,\Width,0);
\coordinate (hzX) at (0,\qsmall*\Width,\Height);
\coordinate (hxhyZ) at (\qsmall*\Depth,\Width,\qsmall*\Height);	
\coordinate (hyhzX) at (\qsmall*\Depth,\qsmall*\Width,\Height);	
\coordinate (hzhxY) at (\Depth,\qsmall*\Width,\qsmall*\Height);	

\coordinate (X) at (0,0,1.5*\Height);
\coordinate (Y) at (1.75*\Depth,0,0);
\coordinate (Z) at (0,1.3*\Width,0);

\draw[white,fill=yellow!35] (C) -- (hxZ) -- (hxhyZ) -- (hyZ) -- cycle;
\draw[white,fill=yellow!35] (A) -- (hyX) -- (hyhzX) -- (hzX) -- cycle;
\draw[white,fill=yellow!35] (B) -- (hzY) -- (hzhxY) -- (hxY) -- cycle;
\draw[white,fill=yellow!20] (O) -- (A) -- (hyX) -- (hxY) -- (B) -- cycle;
\draw[white,fill=yellow!20] (O) -- (B) -- (hzY) -- (hyZ) -- (C) -- cycle;
\draw[white,fill=yellow!20] (O) -- (A) -- (hzX) -- (hxZ) -- (C) -- cycle;
\draw[red, fill=red!30,opacity=0.5] (hyhzX) -- (hzhxY) -- (hxhyZ) -- cycle; 
\draw[red, fill=red!25,opacity=0.5] (hyhzX) -- (hzhxY) -- (hxY)-- (hyX) -- cycle; 
\draw[red, fill=red!25,opacity=0.5] (hyhzX) -- (hxhyZ) -- (hxZ)-- (hzX) -- cycle; 
\draw[red, fill=red!25,opacity=0.5] (hzhxY) -- (hxhyZ) -- (hyZ)-- (hzY) -- cycle; 


\draw[black] (G) -- (D);	
\draw[black] (G) -- (E);	
\draw[black] (G) -- (F);	
\draw[black] (A) -- (D);	
\draw[black] (C) -- (D);	
\draw[black] (F) -- (C);
\draw[black] (F) -- (B);
\draw[black] (E) -- (A);
\draw[black] (E) -- (B);

\draw[blue,densely dotted] (O) -- (A);
\draw[blue,densely dotted] (O) -- (B);
\draw[blue,densely dotted] (O) -- (C);
\draw[blue] (A) -- (hyX);
\draw[blue] (A) -- (hzX);
\draw[blue] (B) -- (hxY);
\draw[blue] (B) -- (hzY);
\draw[blue] (C) -- (hxZ);
\draw[blue] (C) -- (hyZ);


\foreach \rr in {hxY, hxZ, hyX, hyZ, hzX, hzY, hxhyZ, hyhzX, hzhxY} {\filldraw[red](\rr) circle(1.5pt);}
\foreach \rr in {O, A, B, C} {\filldraw[blue](\rr) circle(1.2pt);}
\node [below =1mm of hyX]  {$(1,\frac1{q'},0)$};
\node [left = 1mm of hzX]  {$(1,0,\frac1{q'})$};
\node [right =1mm of hxY]  {$(\frac1{q'},1,0)$};
\node [right =1mm of hzY]  {$(0,1,\frac1{q'})$};
\node [above =1mm of hyZ]  {$(0,\frac1{q'},1)$};
\node [left =1mm of hxZ]  {$(\frac1{q'},0,1)$};
\node [right =1mm of hxY]  {$(\frac1{q'},1,0)$};


\node [above left =1mm of X]  {$1/p_1$};
\node [above=1mm of Y] {$1/p_2$};
\node [left=1mm of Z] {$1/p_3$};
\draw[->,black,densely dotted] (A) -- (X);
\draw[->,black,densely dotted] (B) -- (Y);
\draw[->,black,densely dotted] (C) -- (Z);

	\end{tikzpicture}
	\caption{$q$ small (\Cref{th:2})}
	\label{fig:sub1}
\end{subfigure}%
\begin{subfigure}{.5\textwidth}
	\centering
	\begin{tikzpicture}
	\coordinate (O) at (0,0,0);
	\coordinate (A) at (0,0,\Height);
	\coordinate (B) at (\Depth,0,0);
	\coordinate (C) at (0,\Width,0);
	\coordinate (D) at (0,\Width,\Height);
	\coordinate (E) at (\Depth,0,\Height);
	\coordinate (F) at (\Depth,\Width,0);
	\coordinate (G) at (\Depth,\Width,\Height);
	\coordinate (hx) at (0,0,\qbig*\Height);
	\coordinate (hy) at (\qbig*\Depth,0,0);
	\coordinate (hz) at (0,\qbig*\Width,0);
	\coordinate (hxY) at (\Depth,0,\qbig*\Height);
	\coordinate (hyX) at (\qbig*\Depth,0,\Height);
	\coordinate (hzY) at (\Depth,\qbig*\Width,0);
	\coordinate (hxZ) at (0,\Width,\qbig*\Height);
	\coordinate (hyZ) at (\qbig*\Depth,\Width,0);
	\coordinate (hzX) at (0,\qbig*\Width,\Height);
	\coordinate (hxhyZ) at (\qbig*\Depth,\Width,\qbig*\Height);	
	\coordinate (hyhzX) at (\qbig*\Depth,\qbig*\Width,\Height);	
	\coordinate (hzhxY) at (\Depth,\qbig*\Width,\qbig*\Height);	
	
	\coordinate (X) at (0,0,1.5*\Height);
	\coordinate (Y) at (1.5*\Depth,0,0);
	\coordinate (Z) at (0,1.3*\Width,0);

	\draw[white,fill=yellow!35] (C) -- (hxZ) -- (hxhyZ) -- (hyZ) -- cycle;
	\draw[white,fill=yellow!35] (A) -- (hyX) -- (hyhzX) -- (hzX) -- cycle;
	\draw[white,fill=yellow!35] (B) -- (hzY) -- (hzhxY) -- (hxY) -- cycle;
	\draw[white,fill=yellow!20] (O) -- (A) -- (hyX) -- (hxY) -- (B) -- cycle;
	\draw[white,fill=yellow!20] (O) -- (B) -- (hzY) -- (hyZ) -- (C) -- cycle;
	\draw[white,fill=yellow!20] (O) -- (A) -- (hzX) -- (hxZ) -- (C) -- cycle;
	\draw[red, fill=red!30,opacity=0.5] (hyhzX) -- (hzhxY) -- (hxhyZ) -- cycle; 
	\draw[red, fill=red!25,opacity=0.5] (hyhzX) -- (hzhxY) -- (hxY)-- (hyX) -- cycle; 
	\draw[red, fill=red!25,opacity=0.5] (hyhzX) -- (hxhyZ) -- (hxZ)-- (hzX) -- cycle; 
	\draw[red, fill=red!25,opacity=0.5] (hzhxY) -- (hxhyZ) -- (hyZ)-- (hzY) -- cycle; 
	
	
	\draw[black] (G) -- (D);	
	\draw[black] (G) -- (E);	
	\draw[black] (G) -- (F);	
	\draw[black] (A) -- (D);	
	\draw[black] (C) -- (D);	
	\draw[black] (F) -- (C);
	\draw[black] (F) -- (B);
	\draw[black] (E) -- (A);
	\draw[black] (E) -- (B);

	\draw[blue,densely dotted] (O) -- (A);
	\draw[blue,densely dotted] (O) -- (B);
	\draw[blue,densely dotted] (O) -- (C);
	\draw[blue] (A) -- (hyX);
	\draw[blue] (A) -- (hzX);
	\draw[blue] (B) -- (hxY);
	\draw[blue] (B) -- (hzY);
	\draw[blue] (C) -- (hxZ);
	\draw[blue] (C) -- (hyZ);


	\foreach \rr in {hxY, hxZ, hyX, hyZ, hzX, hzY, hxhyZ, hyhzX, hzhxY} {\filldraw[red](\rr) circle(1.5pt);}
	\foreach \rr in {O, A, B, C} {\filldraw[blue](\rr) circle(1.2pt);}
	\node [below =1mm of hyX]  {$(1,\frac1{q'},0)$};
	\node [left = 1mm of hzX]  {$(1,0,\frac1{q'})$};
	\node [right =1mm of hxY]  {$(\frac1{q'},1,0)$};
	\node [right =1mm of hzY]  {$(0,1,\frac1{q'})$};
	\node [above =1mm of hyZ]  {$(0,\frac1{q'},1)$};
	\node [left =1mm of hxZ]  {$(\frac1{q'},0,1)$};
	\node [right =1mm of hxY]  {$(\frac1{q'},1,0)$};
	
	
	\node [above left =1mm of X]  {$1/p_1$};
	\node [above=1mm of Y] {$1/p_2$};
	\node [left=1mm of Z] {$1/p_3$};
	\draw[->,black,densely dotted] (A) -- (X);
	\draw[->,black,densely dotted] (B) -- (Y);
	\draw[->,black,densely dotted] (C) -- (Z);

\end{tikzpicture}
\caption{$q$ big (Theorem 1.1 in \cite{GHHP22})}
	\label{fig:sub2}
\end{subfigure}
\caption{Optimal range of $L^{p_1}(\R^n)\times L^{p_2}(\R^n)\times L^{p_3}(\R^n)\to L^p(\R^n)$ boundedness of $T^3_\Omega$ when $\Omega\in L^q(\S^{3n-1})$, $q>1$.}
\label{fig:2}
\end{figure}
 \Cref{th:2} provides the optimal range of exponents such that the strong--type boundedness holds.  Building on a counterexample introduced in \cite{DGHST11} and refined in \cite{GHS19}, it was proven in \cite{GHHP22} that $\mathcal{H}^m(q)$ is the largest open set of exponents in which $T^m_\Omega$ is bounded. The counterexample in the next proposition also deals with the endpoints in $\mathcal{H}^m(q)$ for which we have equality in one or more of the equations in \eqref{eq:H^m}.

\begin{prop} \label{pr:counter} If $q>1$, $1<p_1,\dots,p_m<\infty$, $\frac{1}{p}=\sum_{i=1}^m \frac{1}{p_i}$ and $\left(\frac{1}{p_1},\dots,\frac{1}{p_m}\right)\not\in \mathcal{H}^m(q)$, then there exist an odd function $\Omega\in L^q(\S^{mn-1})$ and $f_i \in L^{p_i}(\R^n)$, $i=1,\dots,m$, such that $T_\Omega^m(f_1,\dots,f_m)\not\in L^{p}(\R^n)$.
\end{prop}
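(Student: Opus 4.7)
The plan is to build explicit witnesses $\Omega$ and $(f_1,\dots,f_m)$ by refining the counterexample construction from \cite{DGHST11,GHS19,GHHP22} so as to cover also the boundary of $\mathcal{H}^m(q)$. By permuting the indices, one reduces to the case $\alpha=(1,\dots,1,0,\dots,0)$ with $k:=|\alpha|\geq 2$ (the cases $|\alpha|\in\{0,1\}$ never violate \eqref{eq:H^m} when $p_i>1$), so the hypothesis becomes $\sum_{i=1}^k\frac{1}{p_i}+\frac{k-1}{q}\geq k$.

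For each small $\delta>0$, I would construct, in the spirit of the known rough-kernel counterexamples, an odd $\Omega_\delta\in L^q(\S^{mn-1})$ supported on two antipodal caps of angular radius $\delta$ around $\pm\theta_0$, where $\theta_0\in\S^{mn-1}$ is chosen with its first $k$ $n$-blocks of length $\sim\delta$ and the remaining $m-k$ blocks of unit length, together with test functions $f_i^\delta$ realized as indicators of small balls adapted to this geometry (centered near $0$ for $i\leq k$ and near $-\theta_0^{(i)}$ for $i>k$). A direct computation in polar coordinates $z=(x-y_1,\dots,x-y_m)=r\theta$ on $\R^{mn}$ should yield
\[
\|T^m_{\Omega_\delta}(f_1^\delta,\dots,f_m^\delta)\|_{L^p}\;\geq\;c\,\Phi(\delta)\,\|\Omega_\delta\|_{L^q}\prod_{i=1}^m\|f_i^\delta\|_{L^{p_i}},
\]
where $\Phi(\delta)\to\infty$ as $\delta\to 0$ precisely when $(1/p_1,\dots,1/p_m)\notin\mathcal{H}^m(q)$: polynomially in the strict regime $\sum_{i=1}^k\frac{1}{p_i}+\frac{k-1}{q}>k$, and (at least) logarithmically on the critical hyperplane.

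To upgrade the above to a single $\Omega$ and a single $(f_1,\dots,f_m)$, I would superpose such configurations at a sequence of scales $\delta_j\to 0$: set $\Omega=\sum_j a_j\widetilde\Omega_{\delta_j}$, where $\widetilde\Omega_{\delta_j}$ is an isometric copy of $\Omega_{\delta_j}$ placed around a direction $\theta_j\in\S^{mn-1}$ so that the spherical supports at different scales are pairwise disjoint, and likewise $f_i=\sum_j b_j\widetilde f_i^{(j)}$ with pairwise disjoint spatial supports in $\R^n$ (obtained by translation). Choosing $a_j,b_j,\delta_j$ so that $\Omega\in L^q(\S^{mn-1})$ and $f_i\in L^{p_i}(\R^n)$, the diagonal contributions to $T^m_\Omega(f_1,\dots,f_m)$ (pairing $\widetilde\Omega_{\delta_j}$ with the scale-$j$ test functions) accumulate on pairwise disjoint regions of $\R^n$ and combine into a divergent $L^p$-series, forcing $T^m_\Omega(f_1,\dots,f_m)\notin L^p(\R^n)$.

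The main obstacle is the endpoint regime, where $\Phi(\delta_j)$ is only logarithmic and the margin for summation is narrow: one must balance $a_j,b_j,\delta_j$ delicately so that $\|\Omega\|_{L^q}$ and each $\|f_i\|_{L^{p_i}}$ remain finite while the $L^p$-contributions still diverge. One also has to verify that the off-diagonal terms---those arising from $\widetilde\Omega_{\delta_j}$ paired with a mismatched tuple $(\widetilde f_1^{(j_1)},\dots,\widetilde f_m^{(j_m)})$ with not all $j_i=j$---vanish or are dominated by the diagonal. I expect this to follow from the tight angular localization of $\widetilde\Omega_{\delta_j}$ together with the spatial separation of the $\widetilde f_i^{(j)}$, which ensures that for a given scale index of $\widetilde\Omega$ only matching-scale test functions can put $(x-y_1,\dots,x-y_m)/|(x-y_1,\dots,x-y_m)|$ into the cap supporting $\widetilde\Omega_{\delta_j}$.
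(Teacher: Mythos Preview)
Your plan is genuinely different from the paper's, and it carries real risks that the paper's argument avoids entirely.

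The paper does \emph{not} superpose single-scale cap counterexamples. Instead it writes down one explicit odd function $\Omega\in L^q(\S^{mn-1})$ and one explicit tuple $(f_1,\dots,f_m)$, all built with critical power-times-logarithm weights. Concretely (with $\kappa=|\alpha|$), it takes
\[
f_i(x)=|x|^{-n/p_i}\bigl|\log|x|\bigr|^{-\gamma/p_i}\chi_{\{|x|<1/2\}}\ (i\leq\kappa),\qquad
f_i(x)=|x|^{-n/p_i}\bigl|\log|x|\bigr|^{-\gamma/p_i}\chi_{\{|x|>2\}}\ (i>\kappa),
\]
and, on the sphere, $\Omega(\theta)$ equal to a sign times a negative power of $\bigl(\sum_{i=2}^\kappa|\theta_1-\theta_i|^2\bigr)^{1/2}$ with an additional logarithmic damping, supported where the first $\kappa$ blocks are small. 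A single pointwise lower bound on $T^m_\Omega(f_1,\dots,f_m)(x)$ for $x$ in a cone then gives divergence of $\|T^m_\Omega(f_1,\dots,f_m)\|_{L^p}$: in the strict regime the inner $z$-integral already diverges, while on the critical hyperplane one computes the surviving power of $\log|x|$ and chooses the free parameter $\gamma\in\bigl(1,\tfrac{p+1}{p/q+1}\bigr)$ so that $f_i\in L^{p_i}$ and $\Omega\in L^q$ hold but the $x$-integral diverges. No superposition, no off-diagonal bookkeeping, no balancing of sequences.

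The gaps in your outline are exactly the ones you flag, and they are not minor. First, the off-diagonal control: once $f_i=\sum_j b_j\widetilde f_i^{(j)}$, the $(m+1)$-fold sum over $(j,j_1,\dots,j_m)$ produces an enormous number of cross terms, and your heuristic that angular localization of $\widetilde\Omega_{\delta_j}$ together with spatial separation forces the direction out of the cap needs to be made quantitative across all mismatched tuples simultaneously---this is where such stacking arguments typically break down for multilinear operators. Second, at the endpoint your single-scale gain $\Phi(\delta_j)$ is only logarithmic, so the room to choose $(a_j,b_j,\delta_j)$ with $\sum_j|a_j|^q<\infty$, $\sum_j|b_j|^{p_i}<\infty$ for every $i$, yet $\sum_j$ of the diagonal $L^p$-contributions divergent, is extremely thin; you would have to exhibit concrete sequences and verify all constraints. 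The paper's log-weight construction handles the endpoint with a single real parameter $\gamma$ and a one-line interval check, which is both simpler and complete.
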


In Section~\ref{S:reduction}, we use the bootstrap argument introduced in \cite{GHHP22} to reduce Theorem~\ref{th:2} to proving an estimate of minimal blow-up in the Banach range of exponents when $\Omega \in L^1(\mathbb S^{mn-1})$. This estimate is stated in Proposition~\ref{pr:L1} below and will be proved in Section~\ref{S:proof-L1}. \Cref{section:counter} contains the proof of \Cref{pr:counter}.

\medskip
 \textit{Acknowledgment.} We thank the anonymous referee for their valuable comments that helped to improve the exposition. We also thank Bae Jun Park for pointing out to us an error in the statement of \Cref{pr:L1} in an earlier version of the manuscript.

\section{Reductions}\label{S:reduction}

We fix $m\in\N$, $q\geq 1$ and $\Omega\in L^q(\S^{mn-1})$ with mean value zero and in the following we simplify notation writing $T$ instead of $T^m_\Omega$. Let $\eta$ be a Schwartz function in $\R^{mn}$ such that $\wh\eta(\xi) = 1$ when $|\xi|\leq 1$ and $\wh\eta(\xi) =0$ when $|\xi|\geq 2$, and let $\wh\beta = \wh\eta - \wh\eta(2\cdot)$. We decompose the kernel $K$ by setting $K^i(y) = K(y)\wh{\beta}(2^{-i}y)$ for $i\in \Z$ and 
\[K^i_j (y) = \left({ \wh{K^i}(\cdot)} \wh\beta({2^{-j+i}}\cdot)\right)^\vee(y), \qquad K_j = \sum_{i=-\infty}^{\infty} K^i_j, \qquad j\in \Z.
\]
We decompose $T = \sum_{i=-\infty}^{\infty} \sum_{j=-\infty}^\infty T^i_j = \sum_{j=-\infty}^{\infty} T_j$ accordingly, where $K^i_j$ is the kernel of $T^i_j$ and $K_j$ the kernel of $T_j$. We note that 
\[
\widehat{K_j}=\sum_{i=-\infty}^{\infty} \widehat{K_j^0}(2^i\cdot),
\]
by the homogeneity of $K$. 

Let $1<p_1,p_2,\dots,p_m<\infty$ and let $p$ satisfy $\frac{1}{p} = \sum_{i=1}^m \frac{1}{p_i}$. 
We obtain \Cref{th:2} by proving bounds of the form
\begin{equation}\label{E:bound-Tj}
\|T_j\|_{\MLp} \lesssim Q(j) \|\Omega\|_{L^q(\S^{mn-1})}
\end{equation}
that are summable in $j$. Here and in the following we are using the usual notation $A\lesssim B$ to denote that there exists a constant $c$ such that $A\leq c B$ and we use $A\lesssim_\gamma B$ to imply that $c$ depends on the parameter $\gamma$.

We first recall two estimates of the type~\eqref{E:bound-Tj} which are available in the literature.
The following proposition follows from the multilinear Calder\'on-Zygmund theory~\cite{GT99}. Its specific version stated below was proved in~\cite[Proposition 3]{GHH18} in the bilinear case, and the same argument translates also to the multilinear setting, as observed in~\cite[Equation (3.3)]{GHHP22}.

\begin{customprop}{A}[\cite{GHH18, GHHP22}]\label{pr:GT} Let $q>1$ and $0<\delta<1/q'$. Then inequality~\eqref{E:bound-Tj} holds with $Q(j) = 2^{(mn-\d)j}$ if $j\geq 0$ and $Q(j) = 2^{-|j|(1-\d)}$ if $j<0$.
\end{customprop}

Let $\mathcal{C}^m(q) = \big(0,\frac{1}{q'}\big]^m$.  The following proposition follows from the results in~\cite[Section 3]{GHHP22}. 

\begin{customprop}{B}[\cite{GHHP22}\label{pr:locL2}] 
		Let $q \geq 2$ and $\vec{\frac{1}{p}}\in \mathcal{C}^m(q)$. Then there exist $r = r(\vec{\frac{1}p},n)>0$ and $j_0=j_0(m,n) \in \Z$ such that inequality~\eqref{E:bound-Tj} holds with $Q(j) = 2^{-rj}$ if $j \geq j_0$. 
\end{customprop}

In what follows, we fix $j_0$ from Proposition~\ref{pr:locL2} and we only consider $j\geq j_0$ in light of Proposition~\ref{pr:GT}. The following proposition will be proved in Section~\ref{S:proof-L1}.

\begin{prop}\label{pr:L1} 
Let $j\geq j_0$, $p>1$, $q=1$ and $\varepsilon>0$. Then inequality~\eqref{E:bound-Tj} holds with $Q(j)  \lesssim_\varepsilon 2^{j\varepsilon}$.
\end{prop}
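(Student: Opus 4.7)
We aim to establish
\[
\|T_j(f_1,\dots,f_m)\|_{L^p(\R^n)} \lesssim_\varepsilon 2^{j\varepsilon}\,\|\Omega\|_{L^1(\S^{mn-1})}\prod_{i=1}^m\|f_i\|_{L^{p_i}(\R^n)}
\]
throughout the Banach range $1<p,p_1,\dots,p_m<\infty$.

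\emph{Step 1 (Level-set decomposition of $\Omega$).} By sublinearity of $\Omega\mapsto T_j^\Omega$ we may normalize $\|\Omega\|_{L^1(\S^{mn-1})}=1$. Write $\Omega=\sum_{k\geq 0}\Omega_k$, where $\Omega_k$ is supported on $\{\theta:|\Omega(\theta)|\in[2^{k-1},2^k)\}$ and then adjusted by a small additive constant so that $\int_{\S^{mn-1}}\Omega_k\,d\sigma=0$. Setting $a_k:=\|\Omega_k\|_{L^1(\S^{mn-1})}$ we have $\sum_{k}a_k\lesssim 1$, and for every $q\in[1,\infty]$,
\[
\|\Omega_k\|_{L^q(\S^{mn-1})} \lesssim 2^{k(1-1/q)}\, a_k^{1/q}.
\]

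\emph{Step 2 (Two estimates on each piece).} Since $\mathcal{C}^m(\infty)=(0,1)^m$ contains the whole Banach range, Proposition~\ref{pr:locL2} applied with $q=\infty$ yields
\[
\|T_j^{\Omega_k}\|_{\MLp} \lesssim 2^{-rj}\|\Omega_k\|_{L^\infty}\lesssim 2^{-rj}\cdot 2^{k},
\]
with $r=r(\vec{\frac{1}{p}},n)>0$ independent of $k$. On the other hand, Proposition~\ref{pr:GT} with any $q>1$ and $\delta\in(0,1/q')$ gives
\[
\|T_j^{\Omega_k}\|_{\MLp}\lesssim 2^{(mn-\delta)j}\,\|\Omega_k\|_{L^q}\lesssim 2^{(mn-\delta)j}\cdot 2^{k(1-1/q)}a_k^{1/q}.
\]

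\emph{Step 3 (Balancing and summation).} Fix $\varepsilon>0$ and choose a threshold $k_0=k_0(j,\varepsilon)$. For $k\leq k_0$ we invoke the first bound of Step~2, whose contribution sums to $\lesssim 2^{-rj+k_0}$. For $k>k_0$ we use the second bound with a carefully chosen $q=q(\varepsilon)$ and estimate the tail via H\"older's inequality together with the constraint $\sum_k a_k\lesssim 1$ (and the pointwise bound $a_k\leq 1$). An optimal choice of $k_0$ and $q$ balances both contributions so that each is $\lesssim 2^{j\varepsilon}$, which gives the claim.

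\emph{Main obstacle.} The delicate point is Step~3. Proposition~\ref{pr:GT} carries the very large factor $2^{(mn-\delta)j}$, and in the tail $k>k_0$ this must be compensated for using only the $L^1$-mass information $\sum_k a_k\lesssim 1$, since the $L^\infty$-bound on $\Omega_k$ alone would produce a divergent $k$-series. The interplay between the threshold $k_0$, the exponent $q$, and possibly a further subdivision of the sum according to the relative sizes of $a_k$ is the key technical content of the proposition, in the spirit of the near-$L^1$ arguments developed in the bilinear setting in~\cite{GHS20, HP21}; the cancellation $\int_{\S^{mn-1}}\Omega_k\,d\sigma=0$ built into each level-set piece enters implicitly through Propositions~\ref{pr:GT} and~\ref{pr:locL2}.
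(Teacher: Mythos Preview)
Your Step~3 does not close, and the obstruction is not merely ``delicate'' but fatal for this line of attack. Consider the single-atom case $\Omega=2^{N}\chi_{E}-c$, where $\sigma(E)=2^{-N}$ and $c$ is the small constant enforcing mean zero, with $N\gg j$. Then $\|\Omega\|_{L^1}\approx 1$, the level-set decomposition has essentially one piece $\Omega_N$ with $a_N\approx 1$, and your two inputs from Step~2 give respectively
\[
2^{-rj}\,\|\Omega_N\|_{L^\infty}\approx 2^{-rj+N}
\qquad\text{and}\qquad
2^{(mn-\delta)j}\,\|\Omega_N\|_{L^q}\approx 2^{(mn-\delta)j+N/q'}\ \ (\delta<1/q').
\]
The first blows up with $N$; in the second, getting rid of the $N$-dependence forces $q\to 1^{+}$, which drives $\delta\to 0$ and leaves an irreducible $2^{mnj}$. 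No choice of threshold $k_0$, exponent $q$, or further subdivision by the size of $a_k$ can beat $\min(2^{-rj+N},2^{mnj})$, which for $N>(mn+r)j$ equals $2^{mnj}$. In short, Propositions~\ref{pr:GT} and~\ref{pr:locL2} alone carry no information strong enough to prove Proposition~\ref{pr:L1}; the latter is a genuinely new estimate, not a consequence of the former two.

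The paper's proof uses a completely different mechanism. Since $p>1$ one dualizes to the $(m+1)$-linear form $\Lambda_j$, expands each $T_j^i$ on the Fourier side, and observes that on the support of $\widehat{K_j^0}(2^k\cdot)$ at least two of the frequencies $\xi_1,\dots,\xi_m,-(\xi_1+\cdots+\xi_m)$ live in an annulus of scale $2^{j-k}$. After a partition of unity this places two of the $m+1$ factors into shifted Littlewood--Paley square functions and the rest into shifted maximal functions; Muscalu's bounds (Proposition~\ref{pr:Mus}) for these objects cost only $\prod_i\log(2+2^j|y_i|)$. Integrating against $|K_j^0(\vec{y}\,)|$ and using that $K_j^0$ has essentially the same $L^1$-mass as $K^0$ up to logarithmic weights yields $|\Lambda_j|\lesssim j^{m}\|\Omega\|_{L^1}\prod_i\|f_i\|_{L^{p_i}}$, hence $Q(j)\lesssim j^{m}\lesssim_\varepsilon 2^{j\varepsilon}$. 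The key point is that $\Omega$ enters only through $\|K^0\|_{L^1}\lesssim\|\Omega\|_{L^1}$, with no reference to any higher integrability; this is exactly what your interpolation scheme cannot achieve.
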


We assume \Cref{pr:L1} for now and we state and prove several claims that will  gradually improve the dependence of the bound in estimate~\eqref{E:bound-Tj} on $j$. Theorem~\ref{th:2} will then be a direct consequence of Proposition~\ref{pr:GT} and Claim~\ref{c:3} below.

\begin{claim}\label{c:1}
Let $j\geq j_0$, $q>1$ and $\vec{\frac{1}{p}}\in \mathcal{C}^m(q)$. Then there exists an $r = r(\vec{\frac{1}p},q,n)>0$ such that inequality~\eqref{E:bound-Tj} holds with $Q(j) = 2^{-rj}$.
\end{claim}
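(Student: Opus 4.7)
My plan is to interpolate between the two already-available endpoint bounds: Proposition~\ref{pr:locL2} (providing decay at $q=2$) and Proposition~\ref{pr:L1} (providing sub-exponential growth at $q=1$). Since the growth rate $\varepsilon$ in the latter is at our disposal, the decay from the former will dominate throughout the interpolated range, producing a strictly positive decay rate for every $q\in(1,2)$. The case $q\geq 2$ is already exactly Proposition~\ref{pr:locL2}, so I only need to handle $q\in(1,2)$.

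Fix such a $q$ and $\vec{\tfrac{1}{p}}\in\mathcal{C}^m(q)$, and set $\theta := 2/q'\in(0,1)$, so that $\tfrac{1}{q}=\tfrac{1-\theta}{1}+\tfrac{\theta}{2}$. I will choose a small parameter $\eta>0$ with $\eta < 1/m$ and $(1-\theta)\eta < \min_i 1/p_i$, and define the two interpolation endpoints by
\[
\tfrac{1}{p_{i,0}} := \eta, \qquad \tfrac{1}{p_{i,1}} := \tfrac{1/p_i - (1-\theta)\eta}{\theta},
\]
so that $\tfrac{1}{p_i} = \tfrac{1-\theta}{p_{i,0}} + \tfrac{\theta}{p_{i,1}}$ by construction. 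The constraint $\tfrac{1}{p_i}\leq\tfrac{1}{q'}=\theta/2$ forces $\tfrac{1}{p_{i,1}}\in(0,1/2]$, hence $\vec{\tfrac{1}{p_{(1)}}}\in\mathcal{C}^m(2)$; while $p_0 = 1/(m\eta)>1$. Consequently, Proposition~\ref{pr:L1} provides, for any $\varepsilon>0$, an endpoint bound of type~\eqref{E:bound-Tj} at $(q_0,\vec{\tfrac{1}{p_{(0)}}})=(1,(\eta,\dots,\eta))$ with rate $2^{\varepsilon j}$, and Proposition~\ref{pr:locL2} provides an endpoint bound at $(q_1,\vec{\tfrac{1}{p_{(1)}}}) = (2,\vec{\tfrac{1}{p_{(1)}}})$ with rate $2^{-r_1 j}$ for some $r_1 = r_1(\vec{\tfrac{1}{p_{(1)}}},n)>0$.

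Now I invoke multilinear complex interpolation, treating the linear dependence of $T_j^\Omega$ on $\Omega$ as an additional variable of an analytic family in the manner of Stein. This is applied to the $(m+1)$-linear mapping $(\Omega,f_1,\dots,f_m)\mapsto T_j^\Omega(f_1,\dots,f_m)$ and yields
\[
\|T_j\|_{\MLp} \lesssim 2^{(1-\theta)\varepsilon j - \theta r_1 j}\,\|\Omega\|_{L^q(\S^{mn-1})}.
\]
Choosing $\varepsilon$ small enough that $(1-\theta)\varepsilon < \theta r_1$ yields the positive decay rate $r := \theta r_1 - (1-\theta)\varepsilon > 0$, which is the content of Claim~\ref{c:1}.

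The main obstacle will be the clean invocation of multilinear complex interpolation: one must treat $\Omega$ as an extra interpolation variable alongside the functions $f_1,\dots,f_m$, and the intermediate (or endpoint) targets $L^p$ may fail to be Banach, as $p_1 < 1$ is permitted at the $q=2$ endpoint. This generalization of Riesz--Thorin is well-developed in the multilinear interpolation literature (Calder\'on, Grafakos--Masty{\l}o), but the analytic family must be constructed with care and the admissibility conditions verified so that all exponents line up as stated above.
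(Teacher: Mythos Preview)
Your proposal is correct and follows essentially the same approach as the paper: both reformulate $T_j$ as an $(m+1)$-linear operator in $(\Omega,f_1,\dots,f_m)$ and then interpolate between Proposition~\ref{pr:locL2} at $q=2$ and Proposition~\ref{pr:L1} near $q=1$, choosing the $\varepsilon$ in the latter small enough that the decay from the former dominates. The only cosmetic difference is in the choice of endpoint exponents (the paper scales $\vec{\tfrac1p}$ along the ray through the origin, taking $\tfrac{p_1}{2}\vec{\tfrac1p}$ at one end and $\delta\vec{\tfrac1p}$ at the other, whereas you fix $(\eta,\dots,\eta)$ at the $q=1$ end), and the paper handles the mean-zero constraint on $\Omega$ and the quasi-Banach targets explicitly via $\tilde T_j$ and the interpolation result of \cite{BOS2009}, which are exactly the technical points you flagged.
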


\begin{proof}
If $q \geq 2$ then the claim follows from Proposition~\ref{pr:locL2}. To prove the claim in the case $1<q<2$, we will use complex interpolation between the estimates in Proposition~\ref{pr:locL2} and Proposition~\ref{pr:L1}. To be able to proceed with the interpolation argument, we view $T_j$ as an $(m+1)$--linear operator. More precisely, let  
\[\tilde{T}(\Omega, f_1,\dots, f_m)(x) := pv \int_{\R^{mn}} \left(\Omega - Avg(\Omega) \right)(y')|y|^{-mn} \prod_{i=1}^m f_i(x-y_i) dy,
\]
where $Avg(\Omega) = \int_{\S^{mn-1}} \Omega d\s$ is the mean value of $\Omega$. Note that $\tilde{T}$ is $(m+1)$--linear and for every $\Omega$ with mean zero and for any Schwartz functions $f_1,\dots,f_m$, it agrees with the singular integral defined in \eqref{eq:T}. Therefore, it is given by an $m$--linear convolution with a distribution $W_\Omega$ and $W_{\Omega - Avg(\Omega) } = W_\Omega$, with equality holding in the sense of distributions. 

Similar to $T_j$, we define $\tilde{T}_j$ by setting $\tilde{K}_\Omega = \frac{ \Omega({y'}) - Avg(\Omega)}{|y|^{mn}}$,  $\tilde{K}_\Omega^i(y) = \tilde{K}_\Omega(y)\wh{\beta}(2^{-i}y)$, 
\[\left(\tilde{K}_\Omega\right)^i_j (y) = \left({ \wh{\tilde{K}_\Omega^i}(\cdot)} \wh\beta({2^{-j+i}}\cdot)\right)^\vee(y), \qquad \left(\tilde{K}_\Omega\right)_j = \sum_{i=-\infty}^{\infty} \left(\tilde{K}_\Omega\right)^i_j,
\]
and 
\[\tilde{T}_j(\Omega, f_1,\dots, f_m)(x) =  pv \int_{\R^{mn}} \left(\tilde{K}_\Omega\right)_j(y) \prod_{i=1}^m f_i(x-y_i) dy.
\]
It is immediate that the bounds described above for $T_j$ also hold for $\tilde{T}_j$ and we can use $(m+1)$--linear interpolation (see \cite[Lemma 2.1 and Proposition 2.2]{BOS2009} for the multilinear interpolation result used here) between Banach space $(m+1)$--tuplets of the form $L^q(\S^{mn-1})$, $L^{p_1}(\R^n), \dots, L^{p_m}(\R^n)$.

Fix a $1<q<2$ and a $\vec{\frac{1}{p}}\in \mathcal{C}^{m}(q)$ and assume that $p_1=\min\{p_1,\dots,p_m\}$ (without a loss of generality, since the other cases are symmetric) and note that $p_1>2$ since $q<2$. Then $\frac{p_1}{2}\vec{\frac{1}p} = \left(\frac12,\frac12\frac{p_1}{p_2},\dots, \frac12\frac{p_1}{p_m}\right)\in\mathcal{C}^m(2)$ and Proposition~\ref{pr:locL2} yields
\begin{equation}\label{eq:IntA}
	\|\tilde{T}_j\|_{{L^2(\mathbb S^{mn-1})\times} L^2(\R^n)\times L^{\frac{2p_2}{p_1}}(\R^n)\times\cdots\times L^{\frac{2p_m}{p_1}}(\R^n)\to L^{\frac{2p}{p_1}}(\R^n)} \lesssim 2^{-r_0 j} 
\end{equation}
for some positive exponent $r_0$.

Let $\varepsilon>0$ be small, to be determined momentarily. If $\delta \in (0,1)$ satisfies $\delta<p$, then \Cref{pr:L1} and the embedding $L^{\frac{1}{1-\delta}}(\S^{mn-1})\hookrightarrow L^1(\S^{mn-1})$ imply that there is a constant $C_\varepsilon$ such that  
\begin{equation}\label{eq:IntB}
	\|\tilde{T}_j\|_{{L^{\frac1{1-\delta}}(\mathbb S^{mn-1})\times} L^{\frac{p_1}{\delta}}(\R^n)\times L^{\frac{p_2}{\delta}}(\R^n)\times\cdots\times L^{\frac{p_m}{\delta}}(\R^n)\to L^{\frac{p}{\delta}}(\R^n)} \lesssim C_\varepsilon 2^{\varepsilon j} 
\end{equation}
Interpolating between the estimates \eqref{eq:IntA} and \eqref{eq:IntB} we obtain 
\begin{equation}\label{eq:IntC}
	\|\tilde{T}_j\|_{{ L^q(\mathbb S^{mn-1})\times }\MLp} \lesssim C_\varepsilon^{\frac{p_1 -2}{p_1-2\delta}} (2^{j})^{\varepsilon\frac{p_1 -2}{p_1-2\delta} - r_0\frac{2-2\delta}{p_1-2\delta} }.
\end{equation}
Finally, choosing $\varepsilon< r_0 \frac{2-2\delta}{p_1-2}$ completes the proof of the claim.
\end{proof}

\begin{claim}\label{c:2} 
Let $j\geq j_0$,  $q>1$, $\vec{\frac{1}{p}}\in \mathcal{H}^m(q)$ and $\varepsilon>0$. Then inequality~\eqref{E:bound-Tj} holds with $Q(j)  \lesssim_\varepsilon 2^{j\varepsilon}$.
\end{claim}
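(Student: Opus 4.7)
The plan is to rerun the $(m+1)$-linear complex interpolation used in the proof of \Cref{c:1}, but now with \Cref{c:1} itself---valid for every $q_0>1$---playing the role of the decay endpoint instead of \Cref{pr:locL2}. This substitution is precisely what widens the range of reachable targets from $\mathcal{C}^m(q)$ to $\mathcal{H}^m(q)$. If $\vec{\frac{1}{p}}\in\mathcal{C}^m(q)$ there is nothing to prove, since \Cref{c:1} already gives $2^{-rj}\le 2^{\varepsilon j}$; so I assume $\frac{1}{p_i}>\frac{1}{q'}$ for at least one index~$i$.

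Viewing $T_j$ as the $(m+1)$-linear operator $\tilde T_j$ from \Cref{c:1}, I place the decay endpoint at some $\bigl(\frac{1}{q_0},\vec{\frac{1}{p_{(0)}}}\bigr)$ with $q_0>1$ and $\vec{\frac{1}{p_{(0)}}}\in\mathcal{C}^m(q_0)$, where \Cref{c:1} supplies $\|\tilde T_j\|\lesssim 2^{-r_0 j}$ for some $r_0>0$; and the growth endpoint at some $\bigl(1-\delta,\vec{\frac{1}{p_{(1)}}}\bigr)$ with $\delta\in(0,1)$, each $p_i^{(1)}>1$, and $\sum_i \frac{1}{p_i^{(1)}}<1$, where the embedding $L^{1/(1-\delta)}(\S^{mn-1})\hookrightarrow L^1(\S^{mn-1})$ combined with \Cref{pr:L1} yields $\|\tilde T_j\|\lesssim_\eta 2^{\eta j}$ for any $\eta>0$. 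If a weight $\theta\in(0,1)$ and admissible endpoint data can be chosen so that
\[
\frac{1}{q}=\frac{1-\theta}{q_0}+\theta(1-\delta),\qquad \frac{1}{p_i}=\frac{1-\theta}{p_i^{(0)}}+\frac{\theta}{p_i^{(1)}}\quad(i=1,\dots,m),
\]
the multilinear complex interpolation theorem of \cite{BOS2009} gives
\[
\|\tilde T_j\|_{L^q(\S^{mn-1})\times\MLp}\lesssim 2^{\,j\,[-(1-\theta)r_0+\theta\eta]},
\]
and since $r_0$ and $\theta$ are then fixed, choosing $\eta$ small makes the right-hand side at most $2^{\varepsilon j}$.

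The hard part will be the geometric verification that admissible parameters actually exist, and this is where the hypothesis $\vec{\frac{1}{p}}\in\mathcal{H}^m(q)$ enters. Eliminating $p_i^{(0)}$ from the two relations above, the admissibility constraints $\frac{1}{p_i^{(0)}}\in(0,1/q_0']$ and $\sum_i \frac{1}{p_i^{(1)}}<1$ collapse into the single inequality
\[
\sum_{i=1}^m\Bigl(\frac{1}{p_i}-\frac{1-\theta}{q_0'}\Bigr)_{\!+}<\theta.
\]
In the limit $\theta\to 1/q$ and $\delta\to 0$ the threshold $(1-\theta)/q_0'$ tends to $1/q'$; grouping the indices with $1/p_i>1/q'$ into a set $A$, the inequality then reads $\sum_{i\in A}\frac{1}{p_i}<|A|/q'+1/q=|A|-(|A|-1)/q$, which is precisely the defining inequality of $\mathcal{H}^m(q)$ at $\alpha=\mathbf{1}_A$. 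Because this holds strictly, one may keep $\theta$ and $\delta$ a definite distance from the limiting values while still satisfying the inequality, making the interpolation admissible. Once this geometric check is in place, the rest is a routine adaptation of \Cref{c:1}.
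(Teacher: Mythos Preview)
Your argument is correct but takes a different route from the paper. The paper proves Claim~\ref{c:2} in two lines: it notes that Claim~\ref{c:1} already gives the $2^{j\varepsilon}$ bound throughout $\mathcal{C}^m(q)$, and then invokes the extension result \cite[Proposition~6.2]{GHHP22} as a black box to pass from $\mathcal{C}^m(q)$ to $\mathcal{H}^m(q)$. You instead carry out this extension explicitly by a further round of $(m+1)$-linear complex interpolation, placing the decay endpoint in $\mathcal{C}^m(q_0)$ for $q_0$ large (where Claim~\ref{c:1} applies) and the growth endpoint in the Banach range with $q_1$ near $1$ (where Proposition~\ref{pr:L1} applies). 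Your geometric reduction is sound: with $\delta\to 0$ the relation $\tfrac{1}{q}=\tfrac{1-\theta}{q_0}+\theta$ forces $\tfrac{1}{q_0'}=\tfrac{1/q'}{1-\theta}$, so the threshold $t=(1-\theta)/q_0'$ equals $1/q'$ exactly; the feasibility condition then becomes $\sum_i(1/p_i-1/q')_+<\theta$ with $\theta$ ranging over $(0,1/q)$, and the limiting inequality $\sum_{i\in A}(1/p_i-1/q')<1/q$ is precisely the $\alpha=\mathbf{1}_A$ case of~\eqref{eq:H^m}, which in fact dominates all other choices of $\alpha$ and is therefore equivalent to membership in $\mathcal{H}^m(q)$. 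Your approach is more self-contained, effectively reproving the needed instance of the cited extension principle by hand, while the paper's route is shorter but relies on machinery developed elsewhere.
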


\begin{proof}
Claim~\ref{c:1} yields that~\eqref{E:bound-Tj} holds with $Q(j)  \lesssim_\varepsilon 2^{j\varepsilon}$ whenever $\vec{\frac{1}{p}}\in \mathcal{C}^m(q)$. The proof then follows from the extension result of~\cite[Proposition 6.2]{GHHP22}.
\end{proof}

\begin{claim}\label{c:3} 
Let $j\geq j_0$, $\vec{\frac{1}{p}}\in \mathcal{H}^m(q)$ and $q>1$. Then there exists an $r = r(\vec{\frac{1}p}, q,n)>0$ such that inequality~\eqref{E:bound-Tj} holds with $Q(j)  \lesssim 2^{-rj}$.
\end{claim}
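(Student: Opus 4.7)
The plan is to obtain Claim~\ref{c:3} by interpolating the exponential decay of Claim~\ref{c:1}, which is valid in the Banach range $\mathcal{C}^m(q)$, against the arbitrarily slow growth of Claim~\ref{c:2}, which is valid throughout the open region $\mathcal{H}^m(q)$. Since $q$ is now fixed, only $m$-linear interpolation in the function variables $f_1,\dots,f_m$ is required; no $(m+1)$-linear setup involving $\Omega$ (as in the proof of Claim~\ref{c:1}) is needed.

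Fix $\vec{\frac{1}{p}}\in\mathcal{H}^m(q)$. If $\vec{\frac{1}{p}}\in\mathcal{C}^m(q)$ then Claim~\ref{c:1} concludes the proof, so I may assume otherwise. I first observe the inclusion $\mathcal{C}^m(q)\subset \mathcal{H}^m(q)$: for any $\vec{\frac{1}{s}}\in\mathcal{C}^m(q)$ and any $\alpha\in\{0,1\}^m$ with $|\alpha|=k\geq 1$,
\[
\sum_{i=1}^m \frac{\alpha_i}{s_i} + \frac{k-1}{q} \;\leq\; \frac{k}{q'} + \frac{k-1}{q} \;=\; k - \frac{1}{q} \;<\; k.
\]
Since $\mathcal{H}^m(q)$ is open, the interior of $\mathcal{C}^m(q)$ is nonempty (as $q>1$), and the defining inequalities of $\mathcal{H}^m(q)$ for $|\alpha|=1$ force each $1/s_i\in(0,1)$, I may fix a point $\vec{\frac{1}{s^{(0)}}}$ in the interior of $\mathcal{C}^m(q)$ and extend the line through $\vec{\frac{1}{s^{(0)}}}$ and $\vec{\frac{1}{p}}$ slightly past $\vec{\frac{1}{p}}$ to obtain $\vec{\frac{1}{s^{(1)}}}\in\mathcal{H}^m(q)\cap(0,1)^m$ together with $\theta\in(0,1)$ such that
\[
\vec{\frac{1}{p}} \;=\; (1-\theta)\,\vec{\frac{1}{s^{(0)}}} + \theta\,\vec{\frac{1}{s^{(1)}}}.
\]
Linearity of the sum $\sum_i 1/s_i$ in the coordinates gives the Hölder relation $\frac{1}{p} = (1-\theta)\frac{1}{s^{(0)}} + \theta\frac{1}{s^{(1)}}$ automatically.

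Claim~\ref{c:1} applied at $\vec{\frac{1}{s^{(0)}}}$ furnishes an $r_0 > 0$ with
\[
\|T_j\|_{L^{s^{(0)}_1}(\R^n)\times\cdots\times L^{s^{(0)}_m}(\R^n)\to L^{s^{(0)}}(\R^n)} \lesssim 2^{-r_0 j}\,\|\Omega\|_{L^q(\S^{mn-1})},
\]
while Claim~\ref{c:2} applied at $\vec{\frac{1}{s^{(1)}}}$ yields, for every $\varepsilon>0$, a constant $C_\varepsilon$ with
\[
\|T_j\|_{L^{s^{(1)}_1}(\R^n)\times\cdots\times L^{s^{(1)}_m}(\R^n)\to L^{s^{(1)}}(\R^n)} \leq C_\varepsilon\, 2^{\varepsilon j}\,\|\Omega\|_{L^q(\S^{mn-1})},
\]
both bounds holding for $j \geq j_0$. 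Applying $m$-linear complex interpolation in the function arguments at parameter $\theta$ (the same framework of \cite[Lemma 2.1 and Proposition 2.2]{BOS2009} already used in the proof of Claim~\ref{c:1}) then gives
\[
\|T_j\|_{\MLp} \;\lesssim\; C_\varepsilon^{\theta}\, 2^{(-r_0(1-\theta) + \varepsilon\theta)\,j}\,\|\Omega\|_{L^q(\S^{mn-1})}.
\]
Choosing $\varepsilon < r_0(1-\theta)/\theta$ makes the exponent of $2^j$ strictly negative, yielding the desired bound with $r = r_0(1-\theta) - \varepsilon\theta > 0$.

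The essential content of the proof is the geometric observation that every point of $\mathcal{H}^m(q)$ is strictly interior to a segment with one endpoint in $\mathcal{C}^m(q)$ (where Claim~\ref{c:1} provides exponential decay) and the other in $\mathcal{H}^m(q)$ (where Claim~\ref{c:2} provides controlled growth). I anticipate no substantial obstacle beyond arranging these endpoints carefully: once they are fixed, the conclusion is immediate from the geometric-mean behavior of the $j$-dependent constants under complex interpolation, with the flexibility of $\varepsilon>0$ absorbing the slow growth on the Claim~\ref{c:2} side.
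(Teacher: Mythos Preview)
Your approach is essentially the paper's: interpolate the exponential decay from Claim~\ref{c:1} at a point of $\mathcal{C}^m(q)$ against the $2^{j\varepsilon}$ growth from Claim~\ref{c:2} at a nearby point of $\mathcal{H}^m(q)$; the paper simply fixes the specific point $(1/q',\dots,1/q')\in\mathcal{C}^m(q)$, observes it lies in the interior of $\mathcal{H}^m(q)$, and cites \cite[Theorem~7.2.2]{GModern} for the interpolation. One small caveat: since the target exponents $p$ and $s^{(1)}$ can be below $1$ in $\mathcal{H}^m(q)$, the target spaces may be only quasi-Banach, so the Banach-tuple interpolation result from \cite{BOS2009} that you invoke does not literally apply here; the paper's reference \cite[Theorem~7.2.2]{GModern} is the multilinear interpolation theorem suited to this quasi-Banach setting.
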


\begin{proof}
We observe that the point $\left(\frac{1}{q'},\frac{1}{q'},\dots,\frac{1}{q'}\right)$ is in the interior of $\mathcal{H}^m(q)$ and interpolate between Claim~\ref{c:1} and Claim~\ref{c:2} using~\cite[Theorem 7.2.2]{GModern}.
\end{proof}

\section{Proof of \Cref{pr:L1}}\label{S:proof-L1}

{ A critical tool for the proof of Proposition~\ref{pr:L1} is a bound for the shifted maximal and square functions. Let $\varphi$ and $\psi$ be Schwartz functions on $\R^n$ with compact Fourier supports such that $\wh{\psi}$ is supported away from the origin. For $t>0$ and $v\in\R^n$, we denote $\varphi_{t}^{v}(x) = t^n\varphi(t x-v)$ 
and similarly for $\psi_{t}^{v}(x)$.


%
With this notation, we have 
\begin{customprop}{C}[\cite{GModern,M14}\label{pr:Mus}] 
Let $1<p<\infty$. Then
\begin{align*}
&\left\|\sup_{r\in\Z} \big|f\ast \varphi^v_{2^r}\big| \right\|_{L^p(\R^n)}\lesssim \log(2+|v|)\left\|f\right\|_{L^p(\R^n)}\\
\text{and }\quad &\bigg\|\Big(\sum_{r\in\Z} \left|f\ast \psi^v_{2^r}\right|^2\Big)^{\frac12}\bigg\|_{L^p(\R^n)}\lesssim \log(2+|v|)\left\|f\right\|_{L^p(\R^n)}.
\end{align*}
\end{customprop}

In the one-dimensional setting, Proposition\cref{pr:Mus} is proven in \cite[Theorems 4.1, 5.1]{M14}. The first of the two inequalities is also contained in~\cite[Chapter II, 5.10]{SHarmonic}. The proof in the $n$-dimensional case can be found in
\cite[Proposition 7.5.1]{GModern}.

We next apply Proposition\cref{pr:Mus} to prove Proposition~\ref{pr:L1}. Our argument follows the ideas from \cite{M14,M14ii}.}

\begin{proof}[Proof of \Cref{pr:L1}]
Given $j\in \Z$, let $\Lambda_j$ be the $(m+1)$-linear form given by
\[
\Lambda_j(f_1,\dots,f_{m+1})=\int_{\R^n} T_j(f_1,\dots,f_m)(x) f_{m+1}(x)\,dx.
\]
Since $p{>} 1$, we observe that Proposition~\ref{pr:L1} will follow if we prove that
\[
|\Lambda_j(f_1,\dots,f_{m+1})| {\lesssim} j^{{m}} \|\Omega\|_{L^1(\S^{mn-1})} \|f_{m+1}\|_{L^{p'}(\R^n)} \prod_{j=1}^m \|f_i\|_{L^{p_i}(\R^n)}
\]
for $j\geq j_0$.
To simplify notation, we write $\vec{x}=(x_1,\dots,x_m)\in\R^{mn}$ for $x_i\in\R^n$. We rewrite the form $\Lambda_j$ as follows:
	\begin{align*} &\Lambda_j (f_1,\dots,f_{m+1})\\
		&= \sum_{k\in \Z} \int_{\R^n} \int_{\R^{mn}} \wh {K_{j}^0}(2^k\vec{\xi}\,)  \left(\prod_{i=1}^m \wh f_i(\xi_i)\right)  f_{m+1}(x) e^{2\pi i x\cdot(\xi_1+\cdots+\xi_m)} d\vec{\xi}dx \\
		&= \sum_{k\in \Z} \int_{\R^{mn}} \wh {K_{j}^0}(2^k\vec{\xi}\,)  \left(\prod_{i=1}^m \wh f_i(\xi_i)\right)  \wh{f_{m+1}}(-\xi_1-\cdots-\xi_m) d\vec{\xi}.
	\end{align*}
Notice that $\wh {K_{j}^0}(2^k\cdot)$ is supported in the $mn$--dimensional annulus where $|\vec{\xi}|\sim 2^{j-k}$. Therefore, at least two of the vectors $\xi_1, \dots, \xi_m, (-\xi_1-\cdots - \xi_m)$ have to belong to an $n$--dimensional annulus of radius about $2^{j-k}$. 
By a smooth partition of unity, it suffices to consider pieces of the form 
\begin{equation*}
	\sum_{k\in \Z} \int_{\R^{mn}} \wh {K_{j}^0}(2^k\vec{\xi}\,)  \left(\prod_{i=1}^m \wh f_i(\xi_i)\wh{\phi_i}(2^{k-j}\xi_i) \right) \wh{f_{m+1}}(-\xi_1-\cdots-\xi_m) \wh{\phi_{m+1}}(2^{k-j}(-\xi_1-\cdots-\xi_m))d\vec{\xi},	
\end{equation*}
where $\phi_i$, $i=1,\dots,m+1$, are {Schwartz functions with compact Fourier support such that at least two of them are Fourier supported away from the origin. We denote these two functions by $\phi_{l}$ and $\phi_{l'}$.} 
Using Fourier inversion, the form can then be written as
\begin{align*}& \sum_{k\in \Z} \int_{\R^n} \int_{\R^{mn}} 2^{-kmn} {K_{j}^0}(2^{-k}\vec{y}\,)  \left(\prod_{i=1}^m  f_i\ast (\phi_i)_{2^{j-k}}(x-y_i)\right)  f_{m+1}\ast(\phi_{m+1})_{2^{j-k}}(x) d\vec{y}dx \\
= &\sum_{k\in \Z} \int_{\R^n} \int_{\R^{mn}}  {K_{j}^0}(\vec{y}\,) \left( \prod_{i=1}^m  f_i\ast (\phi_i)_{2^{j-k}}(x-2^{k}y_i)\right)  f_{m+1}\ast(\phi_{m+1})_{2^{j-k}}(x) d\vec{y}dx \\
= &\sum_{k\in \Z} \int_{\R^n} \int_{\R^{mn}}  {K_{j}^0}(\vec{y}\,)  \prod_{i=1}^{m+1}  f_i\ast (\phi_i)^{2^jy_i }_{2^{j-k}}(x)   d\vec{y}dx,
\end{align*}
where we have set $y_{m+1} = 0$. We also formally set $p_{m+1}=p'$. For a fixed $\vec{y}\in\R^{mn}$, we estimate
\begin{align*}
&\left|\sum_{k\in \Z}\int_{\R^n} \prod_{i=1}^{m+1} f_i\ast (\phi_i)^{2^jy_i }_{2^{j-k}}(x) dx  \right|\\
&\leq \int_{\R^n} \prod_{i\neq l,l'}  \sup_{k\in\Z} | f_i\ast (\phi_i)^{2^jy_i }_{2^{j-k}}(x)| \prod_{r\in\{l,l'\}} \left(\sum_{k\in \Z}| f_r\ast (\phi_r)^{2^jy_r }_{2^{j-k}}(x)|^2 \right)^{1/2} dx\\
&\lesssim \prod_{i\neq l,l'}\left\| \sup_{k\in\Z} | f_i\ast (\phi_i)^{2^jy_i }_{2^{j-k}}|\right\|_{L^{p_i}(\R^n)}\prod_{r\in\{l,l'\}} \left\|\left(\sum_{k\in \Z}| f_r\ast (\phi_r)^{2^jy_r }_{2^{j-k}}|^2 \right)^{1/2}\right\|_{L^{p_r}(\R^n)}\\
&\lesssim \left(\prod_{i=1}^m { \log(2+ 2^j|y_i|)} \|f_i\|_{L^{p_i}(\R^n)}\right)\|f_{m+1}\|_{L^{p'}(\R^n)}\\
&\lesssim j^{{m}} \left(\prod_{i=1}^m { \log(2+ |y_i|)} \|f_i\|_{L^{p_i}(\R^n)}\right)\|f_{m+1}\|_{L^{p'}(\R^n)},
\end{align*}
using Proposition\cref{pr:Mus}. Therefore, it suffices to show that 
	\begin{align*} &\int_{\R^{mn}}  \left| K_{j}^0(\vec{y})\right| \prod_{i=1}^m { \log(2+|y_i|)}d\vec{y} \lesssim \|\Omega\|_{L^1(\S^{mn-1})} .
	\end{align*}

	We  recall that $K_j^0=K^0 \ast \beta_{2^j}$. Since $\beta$ is a Schwartz function, we may estimate it as
	\[
	|\beta(x)| {\lesssim} \sum_{k=1}^\infty 2^{-2mnk} \chi_{\B^{mn}(0,2^k)}(x).
	\]
	We further notice that the function $|K^0|\ast \chi_{\B^{mn}(0,2^{k-j})}$ is supported in the set $\B^{mn}(0,2^{k+1})$. Therefore,
	\begin{align*}
		& \int_{\R^{mn}}  \left| K_{j}^0(\vec{y})\right| \prod_{i=1}^m{\log(2+|y_i|)}d\vec{y}\\
		&\lesssim \sum_{k=1}^\infty 2^{mnj-2mnk} \int_{\R^{mn}} |K^0| \ast \chi_{\B^{mn}(0,2^{k-j})} (\vec{y}) \prod_{i=1}^m { \log(2+|y_i|)} d\vec{y}\\
		&\lesssim \sum_{k=1}^\infty k^{{ m}} 2^{mnj-2mnk} \||K^0| \ast \chi_{\B^{mn}(0,2^{k-j})}\|_{L^1(\R^{mn})} 
		\lesssim \|K^0\|_{L^1(\R^{mn})}  \sum_{k=1}^\infty k^{{ m}} 2^{-mnk}\\
		&\lesssim\|K^0\|_{L^1(\R^{mn})} 
		\lesssim \|\Omega\|_{L^1(\S^{mn-1})},
	\end{align*}
	as desired.%
\end{proof}

\section{Counterexample}\label{section:counter}
\begin{proof}[Proof of \Cref{pr:counter}] 
	Since $(\frac{1}{p_1},\dots,\frac{1}{p_m}) \notin \mathcal{H}^m(q)$, there is a multiindex $\alpha$ such that~\eqref{eq:H^m} is not satisfied.
	Without a loss of generality, we can assume that $\alpha=(1,\dots,1,0,\dots,0)$, with $|\alpha|= \kappa\geq 2$, since the cases $|\alpha|=0$ and $|\alpha|=1$ are trivial, and the operator is unaffected by permutations of the functions $f_i$. 
	
	For $i=1,\dots,m$, let 
	\begin{equation*}
		\begin{cases}
			f_i(x) &= |x|^{-n/p_i}\big|\log{|x|}\big|^{-\gamma/p_i}\chi_{|x|<\frac12}(x), \quad \text{for } i=1,\dots,\kappa,\\
			f_i(x) &= |x|^{-n/p_i}\big|\log{|x|}\big|^{-\gamma/p_i}\chi_{|x|>2}(x), \quad \text{for } i=\kappa + 1,\dots,m,
		\end{cases}
	\end{equation*}
	for some $\gamma>1$ to be determined, and note that $f_i\in L^{p_i}(\R^n)$. 
	For $\theta=(\theta_1,\dots,\theta_{m}) \in \S^{mn-1}$, we define 
	\[\Omega(\theta)=\operatorname{sgn}(\theta^1_{1}) \chi_{\mathbb B^{\kappa n}\left(0,\frac1{4\sqrt{\kappa}}\right)} (\theta_1,\dots,\theta_\kappa)\left(\sum_{i=2}^\kappa|\theta_1-\theta_i|^2\right)^{-\frac{(\kappa-1)n}{2q}} \left|\log{\sum_{i=2}^\kappa|\theta_1-\theta_i|^2}\right|^{-\frac{\gamma}{q}},
	\]
	where $\theta_{1}^1$ denotes the first coordinate of $\theta_1$. 
	%
	%
	%
	Then $\Omega$ is an odd function in $L^{q}(\S^{mn-1})$. Indeed, using the co-area formula, \cite[Appendix D]{GClassical}, and setting $r_{\theta,\kappa} = \sqrt{1-\sum_{i=1}^k|\theta_i|^2}$, we have 
	\begin{align*}&\int_{\S^{mn-1}} |\Omega(\theta)|^{q} d\theta = \int_{\mathbb B^{\kappa n}} \int_{r_{\theta,\kappa}\S^{(m-\kappa)n-1}} |\Omega(\theta)|^{q} \frac{d\s(\theta_{\kappa+1},\dots,\theta_m)}{r_{\theta,\kappa}} d\theta_1\cdots d\theta_\kappa.
	\end{align*}
	Since $1-\sum_{i=1}^k|\theta_i|^2\approx 1$ when $\theta$ belongs to the support of $\Omega$, the previous expression is bounded by
	\begin{align*}&\int_{\mathbb B^{\kappa n}(0,\frac1{4\sqrt{\kappa}} )} \left(\sum_{i=2}^\kappa|\theta_1-\theta_i|^2\right)^{-\frac{(\kappa-1)n}{2}} \left|\log{\sum_{i=2}^\kappa|\theta_1-\theta_i|^2}\right|^{-\gamma} d\theta_1\cdots d\theta_\kappa\\
		&\lesssim\int_{\mathbb B^{(\kappa-1) n}(0,\frac1{4\sqrt{\kappa}} )}|u|^{-{(\kappa-1)n}} \big|\log{|u|}\big|^{-{\gamma}}du_2\cdots du_\kappa <\infty,
	\end{align*}
	where the first inequality follows by the change of coordinates to $u_i = \theta_1 - \theta_i$ for $i=2,\dots,\kappa$.
	
	Let $x\in \R^{n}$ be a vector such that $x^1 >10$. We denote $\bar{x} = (x,x,\dots,x)\in(\R^n)^m$.
	Then 
	\begin{align*}T_\Omega^m(f_1,\dots, f_m)(x) &= \int\limits_{|y_1|,\dots,|y_\kappa|<\frac12}\hspace{-.15cm}\cdots\hspace{-.15cm}\int\limits_{|y_{\kappa+1}|,\dots,|y_m|>2} \left(\prod_{i=1}^m |y_i|^{-n/p_i}\left|\log|y_i|\right|^{-\gamma/p_i}\right)\frac{\Omega\left(\frac{\bar{x} -y}{|\bar{x} -y|}\right)}{|\bar{x} -y|^{mn}}dy.
	\end{align*}
	Note that, since $|y_1|<\frac12$,  the first coordinate of $(\bar{x}-y)_1$ is positive, and thus the integrand in the above integral is positive. 
	Therefore, we can estimate it from below by the integral over the smaller region $A\times B$, where
	\begin{equation*}
		A = \left\{(y_1,\dots,y_\kappa)\in\B^{\kappa n}\left(0,\frac1{|x|}\right)\,:\,|y_1|\approx \cdots|y_\kappa| \approx |y_1 - y_2|\approx \dots \approx |y_1-y_\kappa|\right\}
	\end{equation*}
	and
	\begin{equation*}
		B=\B^{n(m-\kappa)}\left((\underbrace{x,\dots,x}_{m-\kappa}),\frac{|x|}{2}\right).
	\end{equation*}
	We write $y=(z,z')$, where $z=(y_1,\dots,y_\kappa)$ and $z'=(y_{\kappa+1},\dots,y_m)$.  In $A\times B$, we have for $i=2,\dots \kappa$,
	\begin{equation*}
	 |y_1-y_i|\approx |z|<\frac1{|x|} \quad \text{and } |\bar{x}-y|\approx |z'|\approx |x|,
	\end{equation*}
	and therefore, for $y\in A\times B$,
	\begin{align*}\Omega\left(\frac{\bar{x}-y}{|\bar{x}-y|}\right) &  =  \left(\sum_{i=2}^\kappa\left|\frac{y_1- y_i}{|\bar{x}-y|}\right|^2\right)^{-\frac{(\kappa-1)n}{2q}} \left|\log{\sum_{i=2}^\kappa\left|\frac{y_1- y_i}{|\bar{x}-y|}\right|^2}\right|^{-\frac{\gamma}{q}}\\
	&\approx  \left(\frac{|z|}{|x|}\right)^{-\frac{(\kappa-1)n}q}\left|\log\frac{|z|}{|x|} \right|^{-\frac{\gamma}{q}}\\
	&\gtrsim |x|^{\frac{(\kappa-1)n}q} |z|^{-\frac{(\kappa-1)n}q}\big|\log|z| \big|^{-\frac{\gamma}{q}}.
	\end{align*}
	Setting $\frac{1}{p_\alpha} = \sum_{i=1}^\kappa \frac{1}{p_i}$ and $\frac{1}{\wh{p}_\alpha} = \sum_{i=\kappa+1}^m \frac{1}{p_i}$, we estimate the absolute value of $T^m_\Omega(f_1,\dots, f_m)(x)$ from below by
	\begin{align*}&\int_{B}\int_{A} \left(\prod_{i=1}^m |y_i|^{-n/p_i}\left|\log|y_i|\right|^{-\gamma/p_i}\right)\frac{\left|\Omega\left(\frac{\bar{x} -y}{|\bar{x} -y|}\right)\right|}{|\bar{x} -y|^{mn}} dy\\
		&\gtrsim  |x|^{-mn+\frac{(\kappa-1)n}q} \int_{B} |z'|^{-\frac{n}{\wh{p}_\alpha}}(\log |z'|)^{-\frac{\gamma}{\wh{p}_\alpha}}dz' \int_{A} |z|^{-n\left(\frac{1}{p_\alpha}+\frac{\kappa-1}q\right)}\big|\log{|z|}\big|^{- \frac{\gamma}{p_\alpha}-\frac{\gamma}{q}} dz\\
		&\gtrsim (\log |x|)^{-\frac{\gamma}{\wh{p}_\alpha}}|x|^{-mn  +\frac{(\kappa-1)n}q - \frac{n}{\wh{p}_\alpha} + (m-\kappa)n}  \int_{A} |z|^{-n\left(\frac{1}{p_\alpha}+\frac{\kappa-1}q\right)} \big|\log|z|\big|^{- \frac{\gamma}{p_\alpha}-\frac{\gamma}{q}} dz,
	\end{align*}
	which diverges when $\frac{1}{p_\alpha}+\frac{\kappa-1}q>\kappa = |\alpha|$. 
	
	When $\frac{1}{p_\alpha}+\frac{\kappa-1}q= \kappa$, the exponents in the previous expression simplify to 
	\begin{align*}&(\log |x|)^{-\frac{\gamma}{\wh{p}_\alpha}}|x|^{-\frac{n}p}  \int_{A} |z|^{-\kappa n}\big|\log{|z|}\big|^{- \frac{\gamma}{p_\alpha}-\frac{\gamma}{q}} dz,
	\end{align*}
	which is equivalent to 
	\[
	(\log |x|)^{1-\frac{\gamma}{p} - \frac{\gamma}q}|x|^{-\frac{n}p}.
	\]
	Therefore,
	\begin{align*}\int_{\R^{n}} |T^m_\Omega(f_1,\dots,f_m)(x)|^p dx &\gtrsim \int_{\{x\in \R^n:~x^1>10\}} |x|^{-n} \left(\log |x|\right)^{p-\gamma - \frac{\gamma p}q}dx,
	\end{align*}
	which is finite if and only if $p-\gamma - \frac{\gamma p}q<-1 \Leftrightarrow \gamma> \frac{p+1}{\frac{p}{q}+1}$. Since $q>1$, we can choose $ 1<\gamma<\frac{p+1}{\frac{p}{q}+1}$ in the definitions of $f_1,\dots, f_m,$ and $\Omega$ to obtain the counterexample.

\end{proof}

\section*{Statements and Declarations}
\subsubsection*{\textbf {Conflict of interest}}
On behalf of all authors, the corresponding author states that there is no conflict of
interest.
\subsubsection*{\textbf {Data availability}}
Data sharing not applicable to this article as no datasets were generated or analysed
during the current study.

\Addresses

\end{document}